\title{On the joint behaviour of speed and entropy of random walks on groups.}
\date{\today}
\author{Gideon Amir}
\newcounter{quest}
\theoremstyle{plain}
    \newtheorem{theorem}{Theorem}
    \newtheorem{lemma}[theorem]{Lemma}
    \newtheorem{proposition}[theorem]{Proposition}
    \newtheorem{corollary}[theorem]{Corollary}
    \newtheorem{conjecture}[quest]{Conjecture}
    \newtheorem{problem}[quest]{Problem}
    \newtheorem{question}[quest]{Question}
\theoremstyle{definition} 
\newcommand{\Z}{{\mathbb Z}}
\newcommand{\pr}{{\mathbf P}}
\newcommand{\R}{{\mathbb R}}
\newcommand{\tree }{\mathbb T}
\newcommand{\Aut}{\operatorname{Aut}}
\renewcommand{\bar}{\overline}
\newcommand{\ev}{\mbox{\bf E}}
\newcommand{\mm}{{\mathcal M_{m}}}
\newcommand{\tm}{{\tree_{m}}}
\newcommand{\ttm}{{\tree_{\tau m}}}
\begin{document}
\maketitle
\begin{abstract}
For every $3/4\le \delta, \beta< 1$ satisfying $\delta\leq \beta < \frac{1+\delta}{2}$ we construct a finitely generated group $\Gamma$ and a (symmetric, finitely supported) random walk $X_n$ on $\Gamma$ so that its expected distance from its starting point satisfies  $\ev |X_n|\asymp n^{\beta}$ and its entropy satisfies $H(X_n)\asymp n^\delta$. In fact, the speed and entropy can be set precisely to equal any two nice enough prescribed functions $f,h$ up to a constant factor as long as the functions satisfy the relation $n^{\frac{3}{4}}\leq h(n)\leq f(n)\leq \sqrt{{nh(n)}/{\log (n+1)}}\leq n^\gamma$ for some $\gamma<1$.
\end{abstract}

\section{Introduction.}\label{s:intro}
Let $G$ be an infinite, finitely generate group with a finite symmetric generating set $S$, and let $X_n$ be a finitely supported, symmetric random walk on $G$ for which the support of $X_1$ generates $G$.
We study the following two quantities:
\begin{enumerate}
\item The \textbf{rate of escape} of the random walk from its starting position $\ev|X_n|$. (Where $|\cdot|$ denotes the word norm w.r.t. $S$.)
\item The \textbf{entropy} of the random walk $H(X_n)=-\sum_{g\in G}P(X_n=g)\log P(X_n=g)$. (Where $0\log 0 =0$.)
\end{enumerate}
Starting with the works of Kesten \cite{Kes}, the rate of escape and the entropy have been connected to many other properties of the group, including the spectral radius, return probabilities, isoperimetric properties and volume growth(see e.g.\cite{Kes,HSC,SCZ}), to the Liouville property (\cite{Der,KV}), and to embeddings into Hilbert space (\cite{ANP,NP}). This list is far from complete and we refer the reader to  \cite{AV,Bri2} and \cite{Gou} for further background.

While for some classes of groups the behaviour of the two quantities are understood, it is in many cases hard to get good bounds on the entropy and rate of escape (e.g. for random walks on Grigorchuk's group).
It is an open question whether the asymptotic behaviours of the rate of escape and the entropy are an invariant of the group $G$, or may depend on the choice of the random walk. (And in the latter case, by how much they can vary.)

The main question we wish to address is
\begin{question}
 What is the possible joint behaviour of the rate of escape and the entropy for random walks on groups?
 \end{question}

We will be interested in classifying the behaviour up to constants. We write
$f\preceq g$ if $f(n)\leq cg(n)$ for some constant $c$ independent of $n$, and $f\asymp g$ if $g\preceq f \preceq g$.
We will therefore study the following variant of the above question:
\begin{question}  For which functions $f$ and $h$ can one construct a random walk $X_n$ on a group $G$ for which $\ev|X_n|\asymp f(n)$ and $H(X_n)\asymp h(n)$?
\end{question}

The question of studying the possible behaviours of each of the two quantities separately has been a long standing open problem since first raised by Vershik.
There has been significant progress in recent years, in particular with regards to the rate of escape. Let us quickly summarize some of the main results in this direction.
It is straightforward that $|X_n|\leq n$ and $H(X_n)\leq nH(X_1)$. It has been shown in \cite{LP} that for any random walk on a group $\ev|X_n|\succeq \sqrt{n}$, and Virag conjectured that for any random walk on a group of exponential growth $H(X_n)\succeq \sqrt{n}$.
Examples where $\ev|X_n|\asymp \sqrt{n}$ (diffusive behaviour) are easy to come by, and include random walks on nilpotent groups as well as many other examples. In many of these cases entropy is also quite well understood. For non-amenable groups, and in fact for any non-Liouville random walk we have ballistic behaviour -  $\ev|X_n|\asymp H(X_n)\asymp n$ (\cite{KV}).
It took a long time until new examples (neither diffusive nor ballistic) for behaviour of the rate of escape were found.
Erschler \cite{ErschlerDrift} showed that the canonical random walk on the $k-1$-times iterated wreath product of $\mathbb Z$ satisfies
$\ev |X_n|\asymp n^{1-2^{-k}}.$ Erschler also showed that it can oscillate between two different powers of n \cite{E4}, and that it can be arbitrarily close to n \cite{E3}.
Recently Brieussel \cite{Bri2} showed that for every $a\in [\frac{1}{2},1]$ there exists a random walk $X_n$ on a group $G$ for which $\limsup \frac{\log \ev|X_n|}{\log n} = a$.

The most relevant works, which can be seen as the starting point of this paper, are the following two results which gave ways to construct random walks on groups with prescribed entropy or rate of escape behaviour within a given range:
\begin{enumerate}
\item It has been shown in \cite{Bri1} and refined in \cite{AV} that for any "nice''  function $h$ between $n^{\frac{1}{2}}$ and $n$ there is a group $G$ and a random walk $X_n$ on $G$ so that $H(X_n)\asymp h(n)$ ( "nice'' in the sense on Theorem \ref{t:main speed entropy} below).
\item It has been shown in \cite{AV} that for any $\gamma<1$ and any "nice'' function $f$ between $n^{\frac{3}{4}}$ and $n^\gamma$ there is a group $G$ and a random walk $X_n$ on $G$ satisfying $\ev|X_n|\asymp f(n)$.
\end{enumerate}

However, in both of the constructions mentioned above, one could only make either the rate of escape or the entropy behave in a prescribed way, and not both.
To be more precise, in both the above constructions, once we fixed one quantity (say the entropy) to behave like some pre-given function, we would get a specific random walk on a group, and that would set the second quantity. When fixing the rate of escape to be $\asymp f(n)$ one got that
$ H(X_n)\asymp\frac{f^2(n)}{n}\log n$ (this follows from bounds in section \ref{s:bounds}), and when fixing the entropy to be $\asymp h(n)$ one could only derive that $h(n)\preceq \ev|X_n| \preceq \sqrt{nh(n)}$.
So it is natural to ask whether one can control both the rate of escape and the entropy simultaneously. That is which pairs of speed and entropy functions are (explicitly?) attainable for random walks on groups. One well-known constraint is that
the rate of escape and the entropy satisfy the "speed-entropy relation'' (see e.g. \cite{ErsSE})
$$ H(X_n) \preceq \ev|X_n| \preceq \sqrt{n(H(X_n)+1)}. $$

The main theorem in this paper shows that for nice enough functions in the range above $n^\frac{3}{4}$ this is essentially the only constraint.
\begin{theorem}\label{t:main speed entropy}
For any $\gamma\in[3/4,1)$ and any functions $f,h:\R_+\rightarrow \R_+$ satisfying
$f(1)=h(1)=1$, the log-Lipshitz condition that for all real $a,n\geq 1$
$$a^{3/4}f(n)\leq f(an)\leq a^\gamma f(n) ,\,\,  a^{3/4}h(n)\leq h(an)\leq a^\gamma h(n)$$ and the relation $$h(n)\preceq f(n)\preceq \sqrt{\frac{nh(n)}{\log (n+1)}}$$
 there is
a group $\Gamma$ and a random walk $X_n$ on $\Gamma$ for which
$$
\ev|X_n|  \asymp f(n) , H(X_n)\asymp h(n).
$$
Where $\asymp (\preceq)$ denotes equality (inequality) up to constants depending on $\gamma$ only.
\end{theorem}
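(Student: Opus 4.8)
The plan is to realize the pair $(f,h)$ as the speed and entropy profiles of a random walk on a \emph{diagonal product} of lamplighter-type groups, in the spirit of the constructions of \cite{AV} (for speed) and \cite{Bri1,AV} (for entropy), but driven by \emph{two} independent parameters per scale rather than one. Concretely, I would build $\Gamma$ as a subgroup of a restricted product $\prod_k \Delta_k$, where each factor $\Delta_k$ is a lamplighter group $A_k\wr\Z$ (suitably rescaled, with a base of effective length $L_k$), and $\Gamma$ is generated by a fixed finite set of \emph{diagonal} generators (a common shift together with diagonal lamp-switches), so that $\Gamma$ is finitely generated and carries a symmetric, finitely supported switch-walk-switch random walk $X_n$. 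The factors are arranged along a geometric sequence of time scales $t_k=\tau^k$ so that, at time $n\approx t_k$, the behaviour of $X_n$ is dominated by the single ``active'' block $\Delta_k$, while all other blocks contribute lower-order terms. The two-function problem then reduces to: (i) choosing, for each $k$, block parameters so that on block $k$ the speed and entropy at time $t_k$ equal $f(t_k)$ and $h(t_k)$ up to constants; and (ii) controlling the cross-scale interference and interpolating between the $t_k$.

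The heart of the matter is the single-block computation and the choice of its two parameters. On a lamplighter $A\wr\Z$ with a base of length $L\preceq\sqrt t$, run for time $t$, the base walk saturates the whole base, so that all $R\asymp L$ lamp sites are visited $\asymp t/L$ times and hence randomised over a spread of the lamp group of diameter $D$ and log-cardinality $\ell$; the word length of the configuration is then of order $L\cdot D$ and its entropy of order $L\cdot\ell$. Taking the lamp group cyclic, $A=\Z_{2m}$, gives $D\asymp m$ and $\ell\asymp\log m$ once the lamp is saturated, so that the two free parameters $(m,L)$ — the lamp size and the base length — yield speed $\asymp L\,m$ and entropy $\asymp L\log m$ at the block's natural scale. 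Solving $L\,m\asymp f(t_k)$ and $L\log m\asymp h(t_k)$ determines $m$ from the ratio $f/h=m/\log m$ (which needs $f\succeq h$, furnished by the left inequality, itself an instance of the speed--entropy relation) and then $L$; the requirements that $L\preceq\sqrt{t_k}$ and that the lamps be genuinely randomised at time $t_k$ translate, up to constants, into the upper constraint $f(t_k)\preceq\sqrt{t_k\,h(t_k)/\log(t_k+1)}$, the stray $\log$ being the entropy cost of randomising each cyclic lamp. Thus the admissible region in the hypotheses is essentially the region in which a valid building block exists at every scale.

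With the blocks fixed I would then prove the matching two-sided bounds. The upper bounds are the more routine direction: the word length of $X_n$ is at most the sum of the per-block word lengths, and $H(X_n)$ is at most the sum of the per-block entropies (plus the base entropy), so one bounds each term by its value on the active block and sums the geometric tails, using the log-Lipschitz hypotheses on $f,h$ to guarantee that the contributions of blocks at scales $\ll t_k$ or $\gg t_k$ decay geometrically and are dominated by the active block. The lower bounds require showing that on the active block many lamps are in fact lit with a well-spread, near-independent distribution: a lower bound on speed via a range and number-of-lit-lamps estimate, and a lower bound on entropy via a separation argument showing that $X_n$ charges $e^{\Omega(h(n))}$ distinct configurations with comparable probability. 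Finally I would promote the estimates from the discrete scales $t_k$ to all $n$ by monotonicity and the log-Lipschitz condition.

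The step I expect to be the genuine obstacle is this simultaneous, \emph{decoupled} control: earlier constructions pin down one of the two invariants and let the other follow, tracing out a one-dimensional curve, whereas here I must fill the two-dimensional region $h\preceq f\preceq\sqrt{nh/\log n}$. This forces the two per-block levers (lamp diameter versus lamp log-size, i.e. $m$ versus $L$) to act independently and, crucially, to keep acting independently across all scales simultaneously under a single fixed finite generating set — so that tuning the speed at scale $t_k$ does not spoil the entropy there, nor perturb either invariant at neighbouring scales. Making the cross-scale interference provably lower order, while keeping the walk symmetric and finitely supported and the entropy lower bound tight, is where the main technical work lies; the extra $\sqrt{\log n}$ gap from the optimal speed--entropy relation is the price paid for this flexibility.
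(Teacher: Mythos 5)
Your plan heads toward a single, highly engineered group realizing both profiles at once, and you correctly identify ``simultaneous decoupled control across all scales'' as the crux --- but that crux is precisely what the paper avoids. The key observation you are missing is that both invariants behave like \emph{maxima} under direct products: for $X_n=(X_n',X_n'')$ on $\Gamma'\times\Gamma''$ one has $\max(\ev|X_n'|,\ev|X_n''|)\leq \ev|X_n|\leq \ev|X_n'|+\ev|X_n''|$, and the same for $H$. It therefore suffices to produce two \emph{one-parameter} walks sitting at the two extremes of the speed--entropy relation: $X_n'$ on $\Z\wr_S\mm$ with $\ev|X_n'|\asymp f(n)$ and $H(X_n')\asymp f^2(n)\log(n+1)/n$, and $X_n''$ on $(\Z_2\wr\Z)\wr_S\mm$ with $\ev|X_n''|\asymp H(X_n'')\asymp h(n)$; both come from the existing permutational-wreath-product machinery of \cite{AV} by merely changing the lamp group. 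The hypotheses $h\preceq f\preceq\sqrt{nh/\log(n+1)}$ are exactly what make $f$ the larger of the two speeds and $h$ the larger of the two entropies, so the product walk does both jobs. This reduces the two-dimensional problem to two already-solved one-dimensional ones and eliminates every cross-scale interference issue you flag.

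As written, your diagonal-product route has genuine gaps at exactly the steps you defer. (1) For a fixed finite set of diagonal generators to generate a group inside $\prod_k (A_k\wr\Z)$ with unboundedly large lamps $A_k$ and base lengths $L_k$, \emph{and} for the word metric of that group to be comparable to the sum of the factors' word metrics (your speed lower bound needs the ``$\succeq$'' direction, not just the easy ``$\preceq$''), is a substantial theorem requiring careful choices of the $A_k$ and of the marked embeddings; it is not automatic for cyclic lamps. (2) The single-active-block picture breaks down for $n$ strictly between $t_k$ and $t_{k+1}$: block $k$ has by then saturated (base exhausted, lamps mixed), so the continued growth of $\ev|X_n|$ and $H(X_n)$ must be carried by neighbouring blocks, and the lower bounds in this intermediate regime require summing contributions across several blocks rather than ``monotonicity and log-Lipschitz.'' (3) The entropy lower bound via ``$e^{\Omega(h(n))}$ configurations of comparable probability'' needs a conditional-independence argument over lamp sites given the base trajectory, in the spirit of Lemma~\ref{l:lower entropy}. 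None of these is necessarily fatal --- diagonal-product constructions of this kind do exist in the literature and even reach below the exponent $3/4$ --- but each is a major piece of work, whereas the paper's product trick needs none of them.
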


The groups and random walks involved are explicitly stated in the proof of the Theorem. (See section \ref{s:groups} for definitions).

\section{Permutational wreath products and the piecewise mother groups.}\label{s:groups}

The groups used in our construction will be \textit{permutational wreath products} over the \textit{piecewise mother groups} that were used in \cite{AV}. Even though all properties we need from these groups will follow from results in \cite{AV}, and they could be treated simply as "black boxes", we give a brief description of these groups to make the construction more explicit.

\subsection{Piecewise mother groups.}
Given a bounded sequence $\mathbf{m}=\{m_i\}_{i\geq 1}$  of integers $m_i\geq 2$, we define $\tm$, the spherically symmetric rooted tree with degree sequence $\mathbf{m}$ to be the following graph. The vertex set consists of all finite sequences $a_k\cdots a_1$ where $a_i\in\{0,\ldots,m_i-1\}$, where the empty sequence $o$ is the root. The edge set consists of all pairs of vertices of the form $\{a_k\cdots a_1,\,a_{k-1}\cdots a_1\}$. We set $m^*=\max m_i$.

Let $\Aut(\tm)$ be the set of rooted automorphisms of $\tm$, i.e.  automorphisms fixing the root.
The piecewise mother groups $\mm$ will be subgroups of $\Aut(\tm)$.

Automorphisms $\gamma\in \Aut(\tm)$ can be written as a product
$$
\gamma=\langle \gamma_0,\ldots, \gamma_{m_1-1}\rangle \sigma
$$
where $\sigma\in \text{Sym}(m_1)$ permutes the subtrees of $o$ and $\gamma_i$
are automorphism of the subtrees. Thus the $\gamma_i$ are elements of $\Aut(\ttm)$ where $\ttm$ is the tree with the shifted degree sequence $\tau m:=m_2,m_3,\ldots$. The natural action of the group $\Aut(T_m)$ on infinite strings (the boundary of the tree) $\cdots a_3a_2a_1$ can be defined recursively by $$\cdots a_3a_2a_1.\gamma=(\cdots a_3a_2.\gamma_{a_1}) a_1.\sigma$$

Let $\Pi$ be the group of all automorphisms of the form $p_\mathbf{\sigma} =\langle id,id,\ldots,id \rangle\sigma$ with $\sigma\in \text{Sym}(m_1)$. (These simply permute the children of the root according to $\sigma$.)
Denote by $\tau_\ell$ the cyclic permutation on $\{0,\ldots,m_\ell-1\}$. Define the automorphism $\mathbf{\rho}\in \tm$ recursively
    $$
    \rho_\ell=\langle \rho_{\ell+1},\tau_{\ell+1},\cdots,\tau_{\ell+1}\rangle\,id
    $$
Set $k$ to be the  least common denominator of $\{m_2,m_3,\ldots\}$ (Which is a subset of $\{2,\ldots, m^*\}$).  Let $H$ be the cyclic group $\{\mathbf{\rho}^i\}_{0\leq i<k}$.

We define the \textbf{piecewise mother group} $\mm$ as the group generated by $H$ and $\Pi$.

We will also specify the random walk on $\mm$. We consider the random walk $Y_n$ on $\mm$ where the step distribution is the even mixture of uniform measures on $\Pi$ and $H$.
Note that a uniformly chosen element of $\Pi$ acts on infinite strings by replacing the first (least significant) digit with a uniform number in $0,\dots,m_1-1$.
A uniform element of $H$ acts by replacing the digit after the first non-zero digit with a uniform number in $0,\dots,m_\ell-1$ (where $l$ is the position of the digit replaced). For more on the piecewise mother groups and their action the reader if referred to \cite{AV}.

\subsection{Permutational wreath products.}
Let $\Gamma$ be a finitely generated countable infinite group acting on a set $S$. We single out an element $o\in S$ and call it the root.
Let $\mu$ be a finitely supported symmetric measure on $\Gamma$. Let $\Lambda$ be a finitely generated countable (possibly finite) group. The \textbf{permutational wreath product} $\Lambda \wr_S \Gamma$ is the semidirect product of $\Lambda^S$ with $\Gamma$ acting on it by permuting the coordinates. The multiplication rule, for $\ell,\ell'\in (\Lambda^S)$ and $g,g'\in \Gamma$ is
$$
(\ell,g)(\ell',g')=(\ell\ell'^{g^{-1}},gg')
$$
where $\ell'^{g^{-1}}$ is defined by $\ell'^{g^{-1}}(s)= \ell'(s.g)$.

A \textbf{switch} is a random element of $\Lambda \wr_S \Gamma$ of the form $(\bar L,id_\Gamma)$, with
$$
\bar L(s)=\begin{cases}id_\Lambda \text{ if } s\not=o, \\
L \text{ if } s=o,
\end{cases}
$$
where $L$ is a random element of $\Lambda$ chosen from a fixed symmetric finitely-supported measure.

We consider the random walk
$$
X_n= \prod_{i=1}^n \bar L_i G_i \bar L'_i
$$
(called the \textbf{switch-walk-switch} random walk) on the permutational wreath product. Here the $G_i$ are independent choices from the measure on $\Gamma$, and the $L_i,L_i'
$ are independent choices from the measure on $\Lambda$. We have $X_i=(\mathfrak{L}_i, Y_i)$ where $\mathfrak{L}_i\in \Lambda^S$ and $Y_i=G_1\cdots G_i\in \Gamma$.

\section{Bounds on $\ev|X_n|$ and $H(X_n)$ for $\Lambda \wr_S \mm$.}\label{s:bounds}

In this next sections section we focus on the groups $\Lambda \wr_S \mm$, where $\Lambda$ is infinite, $S$ is taken to be the orbit of the all-zero ray under the natural action of $\mm$, and the root $o$ is taken to be the all-zero ray.
In \cite{AV}, upper and lower bounds were derived for the speed and entropy of the switch-walk-switch random walk on permutational wreath products. When the group action is "significant enough", as happens for $\Lambda \wr_S \mm$, these bounds can be stated in terms of the speed and entropy functions of $\Lambda$ and the return probabilities  of the random walk $o.X_n$ on $S$.

Denote by $T$ the first return time of $o$ under the walk $o.X_n$ on $S$.
Given a group $\Lambda$ and some random walk $R_n=L_1\cdots L_n$  on it, let $$\overline{\lambda}(n)=\max_{k\leq n}\ev|R_k|, \, \ \ \ \underline{\lambda}(n)=\inf_{k\geq n}\ev|R_k| \ \ \ \text{and} \,\,\, h(n)=H(R_n).$$
 We extend the definitions to $\R_{\geq 1}$ by linear interpolation between the integers.
 In the cases we discuss below the choice of random walk on $\Lambda$ does not matter, as long as it is symmetric, finitely supported and it's support generates $\Lambda$, and we therefore leave it unspecified.
 
The majority of the bounds we give are stated (in a slightly different language) in \cite{AV}, while the lower bound on the entropy we will use is a variation on the lower bound on the speed used there. 
Heuristically these bounds can be derived by assuming that the switch moves are the ones making the essential contribution to both speed and entropy of the walk, and that the switch moves may be analyzed as if they are evenly distributed on the support of $\mathfrak{L}_n$.
 
\begin{lemma}\label{l:lower entropy}
The random walk $X_n$ on the permutation wreath product satisfies for $n\ge 1$
$$
H(X_n) \succeq np h(1/p).
$$
for every $p\le \pr(T>n)$.
\end{lemma}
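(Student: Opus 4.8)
My plan is to bound $H(X_n)$ from below by the entropy stored in the lamp configuration, and then to show that the walk spreads this lamp mass over enough sites, each carrying entropy of order $h(1/p)$. Write $Z_i=o.X_i$ for the induced Markov chain on $S$; it is symmetric and vertex-transitive (hence homogeneous), its first-return time to $o$ is $T$, and the switches of the walk are deposited at the successively visited sites (up to replacing $Z$ by the inverse walk, which has the same law). Let $V_s=\#\{1\le i\le n:Z_i=s\}$ be the local time at $s$. The first step is the observation that, conditionally on the entire trajectory $(Z_i)_{i\le n}$, the switches landing at distinct sites are independent, and those at a fixed site $s$ multiply to an element distributed as $R_{V_s}$. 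Since conditioning does not increase entropy,
$$H(X_n)\ \ge\ H(\mathfrak{L}_n)\ \ge\ H\bigl(\mathfrak{L}_n\mid (Z_i)_{i\le n}\bigr)\ =\ \E\Bigl[\textstyle\sum_s h(V_s)\Bigr],$$
so it remains to prove $\E[\sum_s h(V_s)]\succeq n\,p\,h(1/p)$.

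Set $L=\lceil 1/p\rceil$. Using monotonicity and subadditivity of the walk entropy $h$ (for integers $1\le t\le L$ one has $h(L)\le\lceil L/t\rceil h(t)\le (2L/t)h(t)$, hence $h(t)\ge \tfrac{t}{2L}h(L)$), I would replace $V_s$ by the truncation $\min(V_s,L)$ and reduce the claim to
$$\E\Bigl[\textstyle\sum_s\min(V_s,L)\Bigr]\ \succeq\ n,$$
since $L\asymp 1/p$ and $h(L)\asymp h(1/p)$ up to constants.

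The heart of the argument is a reformulation that replaces the (hard to control) backward local times by a forward quantity. For a site $s$ visited at times $i_1<\dots<i_{V_s}$, the number of future visits $F_i:=\#\{i\le j\le n:Z_j=Z_i\}$ equals $V_s-r+1$ at $i=i_r$, so precisely $\min(V_s,L)$ of the visits to $s$ satisfy $F_i\le L$; summing over $s$ gives the identity $\sum_s\min(V_s,L)=\#\{1\le i\le n:F_i\le L\}$. Now for each fixed $i$ the strong Markov property at time $i$, together with homogeneity, dominates the number of future returns to $Z_i$ by a geometric variable: each successive return lands within the remaining $\le n$ steps with probability at most $\pr(T\le n)\le 1-p$. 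Hence $\pr(F_i>L)\le (1-p)^L\le (1-p)^{1/p}\le e^{-1}$, so $\pr(F_i\le L)\ge 1-e^{-1}$ for every $i$, which summed over $i\le n$ yields $\E[\sum_s\min(V_s,L)]\ge (1-e^{-1})n$. Chaining the three displays gives $H(X_n)\succeq np\,h(1/p)$.

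The step I expect to be the main obstacle is exactly the control of heavily visited sites: because $h$ is sublinear, concentration of the local time on a few sites would collapse the entropy, and the condition $p\le\pr(T>n)$ must be leveraged to rule this out. A direct per-site estimate of $V_s$ through the return probability overcounts and loses a spurious factor $1/p$; passing to the forward variable $F_i$ is what lets the forward Markov property be applied cleanly and removes that loss.
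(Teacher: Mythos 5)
Your overall architecture is the right one, and it is essentially the one the paper imports from Theorem 9 of \cite{AV}: the reduction $H(X_n)\ge H(\mathfrak{L}_n\mid \text{trajectory})=\E\bigl[\sum_s h(\#\text{switches at }s)\bigr]$ is exactly the entropy-specific modification the paper highlights, and the truncation via subadditivity of $h$ together with the identity $\sum_s\min(V_s,L)=\#\{i: F_i\le L\}$ is the correct combinatorial skeleton. However, there is a genuine gap at the one step where the hypothesis $p\le\pr(T>n)$ is actually used. You assert that the induced chain $Z_i=o.X_i$ on $S$ is vertex-transitive, and you need this to conclude that from an arbitrary site $Z_i$ the probability of returning within the remaining time is at most $\pr(T\le n)\le 1-p$. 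This is false here: $S$ is the orbit of a ray under $\mm$, and such Schreier graphs are highly inhomogeneous (the whole construction rests on being able to tune $\pr(T>n)$, which would be impossible for a transitive graph of this type). The hypothesis controls returns to $o$ only, not $\sup_s \pr_s(T_s\le n)$, so $\pr(F_i>L)\le(1-p)^L$ is unjustified for the forward orbit.

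The second, related, problem is that under the paper's multiplication rule $(\ell,g)(\ell',g')=(\ell\,\ell'^{g^{-1}},gg')$ the switches are not deposited along the forward orbit $o.Y_i$ at all, but along the \emph{inverted orbit} $o.Y_i^{-1}$ with $Y_i=G_1\cdots G_i$; your parenthetical ``up to replacing $Z$ by the inverse walk, which has the same law'' is not correct, since the inverted orbit is not a Markov chain and does not have the same law as a process (only the one-dimensional marginals agree). The irony is that passing to the inverted orbit is precisely what repairs the transitivity gap: for $i<j$ one has $o.Y_i^{-1}=o.Y_j^{-1}$ if and only if $Y_i^{-1}Y_j=G_{i+1}\cdots G_j\in\stab(o)$, i.e.\ if and only if a \emph{fresh} forward walk started at $o$ at time $i$ returns to $o$ at time $j-i$. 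Hence the forward visit count $F_i$ of the inverted orbit is exactly one plus the number of returns to $o$ by time $n-i$ of an independent copy of the chain, which the strong Markov property at successive returns to $o$ dominates by a geometric variable of parameter $\pr(T\le n)\le 1-p$, giving $\pr(F_i>L)\le(1-p)^L$ with no homogeneity assumption. With that substitution your argument goes through and recovers the bound; as written, the key probabilistic estimate rests on a false claim about the geometry of $S$.
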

\begin{proof}
The proof follows closely that of Theorem $9$ in \cite{AV} in which the bound $\ev|X_n|\succeq np\underline \lambda(1/p)$ is proved, and we omit the details. One needs only to replace all instances of $\underline{\lambda}$ with $h$ and argue that $H(X_n)\geq \ev\left(\sum_{s\in S}h(\#\text{switch moves at s})\right)$ since the switch moves at the different points at $s$ are independent given the number of switches made at each point. The implicit constants are absolute (do not depend on $\mm$ or $\Lambda$).
\end{proof}

\begin{proposition}\label{p:bounds}
Let $\Lambda$ be some infinite finitely generated group. Let $\textbf{n}=(m_\ell)$ be some bounded degree sequence, and consider the switch-walk switch random walk $X_n$ on $\Lambda\wr \mm$.
For every $p\le \pr(T>n)$ and for every $q\ge \sum_{i=0}^n \pr (T>i)$ we have
\begin{eqnarray*}
\ev |X_n| &\succeq& np\underline \lambda(1/p) \\
\ev |X_n|&\preceq &q\overline \lambda\left(n/q\right)
\\
H(X_n) &\succeq& nph(1/p)
\\
H(X_n) &\preceq&  q\left(h\left(n/q\right) + \log (n+1)\right).
\end{eqnarray*}
With the constants depending on $m^*=\max m_\ell$ only.
\end{proposition}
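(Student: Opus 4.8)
The plan is to reduce all four estimates to the behaviour of the occupation counts of the switch-walk-switch walk. Write $X_i=(\mathfrak{L}_i,Y_i)$ and let $N_s$ denote the number of switch moves deposited at a site $s\in S$ during the first $n$ steps, so that $\sum_{s}N_s=2n$ and, conditionally on the trajectory $(o.Y_i)_{i\le n}$ on $S$, the lamps $\{\mathfrak{L}_n(s)\}_s$ are independent with $\mathfrak{L}_n(s)$ distributed as $R_{N_s}$. The ``significant action'' of $\mm$ on $S$ established in \cite{AV} means that, up to the contribution of the underlying $\mm$-walk (the length and entropy of $Y_n$ and the cost of routing through the lit sites), both quantities are governed by the lamps: $\ev|X_n|\asymp\ev\sum_s|\mathfrak{L}_n(s)|$, while $H(X_n)$ differs from $\ev\sum_s h(N_s)$ only by the entropy of the occupation data. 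Finally, since $o.X_n$ is a transitive Markov chain on $S$, a time-reversal argument gives that the expected range satisfies $\ev R=\sum_{i=0}^n\pr(T>i)\le q$, where $R$ is the number of distinct sites visited.

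The two lower bounds are the easier half. The entropy lower bound $H(X_n)\succeq nph(1/p)$ is exactly Lemma~\ref{l:lower entropy}, and the speed lower bound $\ev|X_n|\succeq np\,\underline\lambda(1/p)$ is Theorem~$9$ of \cite{AV}; the two are proved by the same mechanism, so I would simply record it. The point is that for any $p\le\pr(T>n)$ one produces $\gtrsim np$ sites each carrying $\gtrsim 1/p$ switch moves, and then sums the per-site contributions $\underline\lambda(1/p)$ (respectively $h(1/p)$) over these sites, using that the contributions are independent across sites given the counts.

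For the upper bounds I would exploit that both $g=\overline\lambda$ and $g=h$ are nondecreasing and subadditive. Indeed $\ev|R_{a+b}|\le\ev|R_a|+\ev|R_b|$ and $H(R_{a+b})\le H(R_a)+H(R_b)$ follow from the triangle inequality and from subadditivity of entropy applied to the decomposition $R_{a+b}=R_a\cdot(R_a^{-1}R_{a+b})$, and these pass to the running maximum defining $\overline\lambda$. Choosing the deterministic window $t=n/q$ and writing $g(N_s)\le\lceil N_s/t\rceil\,g(t)$ (monotonicity then subadditivity), summation gives $\sum_s g(N_s)\le g(t)\sum_s(N_s/t+1)=g(n/q)\,(2q+R)$. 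Taking expectations and using $\ev R\le q$ yields $\ev\sum_s g(N_s)\le 3q\,g(n/q)$. For $g=\overline\lambda$ this is, after adding the routing term handled in \cite{AV}, the bound $\ev|X_n|\preceq q\,\overline\lambda(n/q)$; for $g=h$ it is the main term $q\,h(n/q)$ of the entropy upper bound.

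It remains to account, in the entropy upper bound, for the entropy of the occupation data itself, via the decomposition $H(\mathfrak{L}_n)\le H(\mathcal{R})+H(\{N_s\}\mid\mathcal{R})+\ev\sum_s h(N_s)$, where $\mathcal{R}$ is the set of visited sites. The middle term is at most $\ev[\log\binom{2n}{R}]\le\ev[R\log(2en/R)]\le\log(2en)\,\ev R\le q\log(2en)\preceq q\log(n+1)$, using $R\ge 1$ and $\ev R\le q$; this supplies the $q\log(n+1)$ summand. The genuinely delicate term, and the step I expect to be the main obstacle, is $H(\mathcal{R})$: a naive encoding of which sites are lit would cost the logarithm of the volume of a radius-$n$ ball, which is linear in $n$ for these exponential-growth groups and far too large. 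Controlling $H(\mathcal{R})$ therefore requires the specific geometry of the walk $o.X_n$ on $S$, namely that the visited set is a low-complexity, essentially confined subset specified by only $O(\log n)$ bits rather than spread over an exponential ball; this is precisely what is extracted from the analysis in \cite{AV}, and the same structural input is what lets one absorb the $\mm$-walk's own length and entropy, and the routing cost, into the main terms.
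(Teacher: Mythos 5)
Your proposal is correct and follows the same route as the paper: the paper's proof of Proposition~\ref{p:bounds} is simply a citation of Lemma~\ref{l:lower entropy} together with Corollaries $14$ and $24$ of \cite{AV}, and what you have written out is precisely the content of those citations --- the lower bounds via the occupation-count mechanism of Lemma~\ref{l:lower entropy} and Theorem $9$ of \cite{AV}, and the upper bounds via monotonicity and subadditivity of $\overline\lambda$ and $h$ over the visited range together with the $q\log(n+1)$ cost of encoding the occupation data. You also correctly isolate the one genuinely $\mm$-specific input (controlling $H(\mathcal R)$, the routing cost, and the base walk's own contribution via the confinement of the Schreier orbit of $o$), which is exactly the role played by Corollary $24$ of \cite{AV} in the paper's one-line proof.
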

\begin{proof}
This follows from the bound in Lemma \ref{l:lower entropy} and Corollary $14$ of \cite{AV}, which gives general bounds for permutational wreath products,  together with Corollary $24$ of \cite{AV} which shows that for the case of $\Lambda \wr_S \mm$ the terms appearing in this proposition are the dominant ones.
The constants involved can be chosen to depend only on $m^*=\max m_\ell$, and this dependence only enters through the use of Corollary $24$.
\end{proof}

\section{The extreme cases of the speed-entropy relation.}\label{s:extremes}

To prove Theorem \ref{t:main speed entropy}, we will first construct two groups and random walks on these groups, for which the speed and entropy lie (almost) on the extremes of the speed-entropy relation. That is
one random walk $X_n'$ for which $\ev|X_n'| \asymp H(X_n')$ and one random walk $X_n''$ for which $\ev|X_n''| \succeq \sqrt{\frac{nH(X_n'')}{\log (n+1)}}$. We will then combine the two constructions to prove Theorem \ref{t:main speed entropy}. The "extreme" cases are summed up in the following theorem:

\begin{theorem}\label{t:extremes}
For any $\gamma\in[3/4,1)$ and any function $f:\R_+\rightarrow \R_+$ satisfying
$f(1)=1$ and the log-Lipshitz condition that for all real $a,n\ge 1$
$$a^{3/4}f(n)\leq f(an)\leq a^\gamma f(n)$$
 there is
a bounded sequence $\mathbf{m}=(m_\ell)$ such that
\begin{enumerate}
\item \label{Th:lower entropy} The switch-walk-switch random walk $X_n$ on $\Z \wr_S {\mathcal M_{m}}$ satisfies that
$$
\ev|X_n|  \asymp f(n) , \, \, H(X_n) \asymp \frac{f^2(n)\log (n+1)}{n}.
$$
\item \label{Th:upper entropy}The switch-walk-switch random walk $X_n$ on $(Z_2 \wr \Z) \wr_S {\mathcal M_{m}}$ satisfies that
$$
\ev|X_n|  \asymp H(X_n) \asymp f(n).
$$

\end{enumerate}\
with constants in both clauses depending on $\gamma$ only.
\end{theorem}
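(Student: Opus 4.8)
The plan is to derive both clauses from the four bounds of Proposition~\ref{p:bounds} by a single choice of the degree sequence $\mathbf{m}$, reading off the speed and entropy functions $\overline\lambda,\underline\lambda,h$ of the lamp group $\Lambda$ and tuning the return-time tail $\pr(T>n)$ so that the matching upper and lower bounds coincide. Note first that the base walk $o.X_n$ on $S$, and hence the first return time $T$, depends only on $\mm$ and not on $\Lambda$, so one and the same $\mathbf{m}$ governs both clauses and only the triple $(\overline\lambda,\underline\lambda,h)$ changes. The single input I would import from \cite{AV} is that the degree sequence can be chosen so that
$$\pr(T>n)\asymp \frac{f^2(n)}{n^2}.$$
This is a legitimate target: the log-Lipschitz bounds force $f^2(n)/n^2$ to lie between $n^{-1/2}$ and $n^{2\gamma-2}$, so it is decreasing, tends to $0$, and equals $1$ at $n=1$; that is, it is a null-recurrent tail of exactly the type the mother-group action realizes.

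Given such $\mathbf{m}$, I would substitute $p\asymp\pr(T>n)\asymp f^2(n)/n^2$ into the lower bounds and $q\asymp\sum_{i=0}^n\pr(T>i)$ into the upper bounds. The key preliminary computation is that, because $g(i):=f^2(i)/i$ is log-Lipschitz with exponents in $[1/2,2\gamma-1]\subset(0,1)$, a Karamata-type summation gives
$$q\asymp \sum_{i=1}^n \frac{f^2(i)}{i^2}=\sum_{i=1}^n\frac{g(i)}{i}\asymp g(n)=\frac{f^2(n)}{n}\asymp np,$$
so the two scales $np$ and $q$ agree up to constants. One should also check that $p\mapsto np\underline\lambda(1/p)$ and $q\mapsto q\overline\lambda(n/q)$ are monotone, so that taking $p$ (respectively $q$) at the extreme admissible value is the sharp choice; for the diffusive lamp groups below these are $n\sqrt p$ and $\sqrt{nq}$, increasing in $p$ and $q$ as required.

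For clause~\ref{Th:lower entropy}, $\Lambda=\Z$ gives $\overline\lambda(n)\asymp\underline\lambda(n)\asymp\sqrt n$ and $h(n)\asymp\log(n+1)$. Then $\underline\lambda(1/p)\asymp\overline\lambda(n/q)\asymp n/f(n)$, so both speed bounds read $np\underline\lambda(1/p)\asymp q\overline\lambda(n/q)\asymp f(n)$, while the entropy bounds read $nph(1/p)\asymp\frac{f^2(n)}{n}\log(1/p)$ and $q\bigl(h(n/q)+\log(n+1)\bigr)\asymp q\log(n+1)$; since $\log(1/p)\asymp\log(n/q)\asymp\log(n+1)$ (all comparable to $\log n$ because $f$ is trapped between two positive powers of $n$), both equal $\frac{f^2(n)\log(n+1)}{n}$. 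For clause~\ref{Th:upper entropy}, $\Lambda=Z_2\wr\Z$ keeps $\overline\lambda\asymp\underline\lambda\asymp\sqrt n$, so the speed is again $\asymp f(n)$, but now $h(n)\asymp\sqrt n$. The entropy lower bound becomes $nph(1/p)\asymp n\sqrt p\asymp f(n)$, and the upper bound becomes $q\bigl(\sqrt{n/q}+\log(n+1)\bigr)\asymp\sqrt{nq}+q\log(n+1)\asymp f(n)+\frac{f^2(n)\log(n+1)}{n}$; here the second term is negligible precisely because $f(n)\preceq n^\gamma$ with $\gamma<1$ forces $f(n)\log(n+1)\preceq n$, so $H(X_n)\asymp f(n)\asymp\ev|X_n|$.

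The genuinely hard step is the one I am importing: producing a bounded degree sequence whose induced return tail matches $f^2(n)/n^2$ two-sidedly and regularly enough that both the pointwise value $\pr(T>n)$ and the partial sums $\sum_{i\le n}\pr(T>i)$ are controlled. This is where the analysis of the walk on the orbit $S$ in \cite{AV} does the work, and the only thing one must verify is that the range of tails it can realize contains $f^2(n)/n^2$ for every admissible $f$; the log-Lipschitz hypotheses are exactly what guarantee this, and also what make the summation and the logarithmic comparisons above hold with constants depending only on $\gamma$. Everything else is the bookkeeping of substituting the two pairs of lamp-group functions into Proposition~\ref{p:bounds}.
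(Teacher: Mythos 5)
Your proposal is correct and follows essentially the same route as the paper: it reduces both clauses to Proposition~\ref{p:bounds} via a degree sequence realizing the return-time tail $\pr(T>n)\succeq f^2(n)/n^2$ with $\sum_{i\le n}\pr(T>i)\preceq f^2(n)/n$ (the paper packages this, citing Corollary 19 and Lemma 20 of \cite{AV}, as Corollary~\ref{c:cor} applied to $g(n)=f^2(n)/n$), and then plugs in the same two lamp groups $\Z$ and $\Z_2\wr\Z$ with the same $\overline\lambda,\underline\lambda,h$. Your explicit Karamata-type summation and the verification that $\frac{f^2(n)}{n}\log(n+1)\preceq f(n)$ in clause~\ref{Th:upper entropy} are exactly the bookkeeping the paper leaves implicit.
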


To prove the Theorem we will use the following corollary, which combines the bounds in Proposition \ref{p:bounds} with the construction of degree sequences $\mathbf{m}$ for which the first return time $T$ of $o.X_n$ can be sufficiently controlled.

\begin{corollary}\label{c:cor} For any $\gamma\in[1/2,1)$ and any function $g:\R_+\rightarrow \R_+$ satisfying
$g(1)=1$ and the log-Lipshitz condition that for all real $a,n\ge 1$
$$a^{1/2}g(n)\leq g(an)\leq a^\gamma g(n)$$
 there is
a bounded sequence $\mathbf{m}=(m_\ell)$ such that the switch-walk-switch random walk $X_n$ on $\Lambda \wr_S \mm$ satisfies
\begin{eqnarray}
H(X_n) &\succeq&   g(n)h(\frac{n}{g(n)}).
\\
H(X_n) &\preceq&   g(n)\left( h(\frac{n}{g(n)}) + \log(n+1) \right).
\end{eqnarray}
And \begin{eqnarray}
 \ev|X_n| &\succeq& g(n)
 \underline \lambda\left(\frac{n}{g(n)}\right),
 \\
\ev |X_n| &\preceq& g(n) \overline \lambda\left(\frac{n}{g(n)}\right).
\end{eqnarray}
With constants depending on $\gamma$ only.
\end{corollary}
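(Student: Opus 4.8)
The plan is to derive all four estimates from Proposition~\ref{p:bounds} by feeding it a single, well-chosen pair $(p,q)$, thereby reducing the corollary to a statement about the first return time $T$ of the walk $o.X_n$ on $S$. The target is to construct the bounded degree sequence $\mathbf m$ (depending only on $g$ and $\gamma$, not on $\Lambda$, since $T$ depends only on $\mm$) so that
$$ \pr(T>n)\asymp \frac{g(n)}{n} $$
with constants depending only on $\gamma$. Granting this, I would set $p=c\,g(n)/n$ and $q=C\,g(n)$ for constants $0<c\le 1\le C$ chosen so that $p\le\pr(T>n)$ and $q\ge\sum_{i=0}^n\pr(T>i)$; then $np\asymp g(n)$, $1/p\asymp n/g(n)$ and $n/q\asymp n/g(n)$, matching the arguments appearing in the corollary.

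Before plugging in, I would check that these two admissibility constraints are compatible. The first, $p\le\pr(T>n)$, is immediate from the lower bound on the return probability once $c$ is small enough. For the second I would use the log-Lipschitz hypothesis in its downward form: writing $n=(n/i)\,i$ with $n/i\ge 1$ gives $g(i)\le (i/n)^{1/2}g(n)$ for $1\le i\le n$, whence
$$ \sum_{i=0}^n \pr(T>i)\;\preceq\; \sum_{i=1}^n\frac{g(i)}{i}\;\preceq\; \frac{g(n)}{\sqrt n}\sum_{i=1}^n\frac{1}{\sqrt i}\;\preceq\; g(n), $$
so $q=C\,g(n)$ is admissible for $C$ large enough. (The matching lower bound $\sum_{i=0}^n\pr(T>i)\succeq g(n)$, via $g(i)\succeq (i/n)^\gamma g(n)$ and $\sum_{i\le n} i^{\gamma-1}\asymp n^\gamma$, shows $q\asymp g(n)$ is sharp, but is not needed here.)

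With $p,q$ fixed, the four bounds follow by monotonicity. Since $\underline{\lambda},\overline{\lambda}$ and $h$ are non-decreasing and $1/p\ge n/g(n)\ge n/q$, the lower bounds give $\ev|X_n|\succeq np\,\underline{\lambda}(1/p)\succeq g(n)\,\underline{\lambda}(n/g(n))$ and $H(X_n)\succeq np\,h(1/p)\succeq g(n)\,h(n/g(n))$, while the upper bounds give $\ev|X_n|\preceq q\,\overline{\lambda}(n/q)\preceq g(n)\,\overline{\lambda}(n/g(n))$ and $H(X_n)\preceq q\big(h(n/q)+\log(n+1)\big)\preceq g(n)\big(h(n/g(n))+\log(n+1)\big)$. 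The constants incurred by replacing $p,q$ with $g(n)/n,g(n)$ are harmless because each argument moves only by a bounded factor and in the monotone-favourable direction, and all constants depend on $\gamma$ alone.

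The real content, and the step I expect to be the main obstacle, is the construction of the bounded degree sequence realizing $\pr(T>n)\asymp g(n)/n$; this is precisely where the analysis of the walk $o.X_n$ on $S$ for the piecewise mother groups from \cite{AV} is needed. One tunes the blocks of $\mathbf m$ so that the induced walk on $S$ escapes at the prescribed rate, sweeping its return probability across the whole band $[\,n^{-1/2},\,n^{\gamma-1}\,]$: the recurrent, $\Z$-like endpoint $n^{-1/2}$ corresponds to $g(n)=\sqrt n$, and the near-transient regime to $\gamma\uparrow 1$. The constraint $\gamma\in[1/2,1)$ together with the log-Lipschitz bounds is exactly what confines $g(n)/n$ to this realizable band and keeps the profile regular enough to be matched by a bounded sequence; verifying that the \cite{AV} construction delivers this two-sided control on $\pr(T>n)$ is the technical heart of the proof.
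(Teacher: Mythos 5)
Your proposal is correct and follows essentially the same route as the paper: both reduce the corollary to Proposition~\ref{p:bounds} by taking $p\asymp g(n)/n$ and $q\asymp g(n)$, deferring the construction of the degree sequence controlling $T$ to the results of \cite{AV} (Corollary 19 and Lemma 20 there). The only cosmetic difference is that the paper imports the summed bound $\sum_{i=0}^n\pr(T>i)\preceq g(n)$ directly from \cite{AV}, whereas you re-derive it from a pointwise upper bound $\pr(T>i)\preceq g(i)/i$ via the log-Lipschitz condition --- a valid computation, though it asks \cite{AV} for slightly more (two-sided pointwise control) than the paper actually uses.
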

\begin{proof}
Corollary $19$ and Lemma $20$ of \cite{AV} state that one can choose the degree sequence $\textbf{m}=(m_\ell)$ so that $\pr(T>n)\succeq \frac{g(n)}{n}$ and $\sum_{i=0}^n \pr (T>i)\preceq g(n)$,
with constants depending only on $\gamma$ (through the choice of $m^*$). The conclusion now follows from Proposition \ref{p:bounds}.
\end{proof}

\begin{proof}[Proof of Theorem \ref{t:extremes}]
To prove the theorem, we consider the group $\Lambda \wr_S {\mathcal M_{m}}$  and describe several cases for the choice of the group $\Lambda$, using the above Corollary to bound $\ev|X_n|$ and $H(X_n)$.
Define $g(n) = \frac{f^2(n)}{n}$.
Note that a function $f$ satisfies the conditions of the Theorem if and only if $g$ satisfies the conditions of Corollary \ref{c:cor}.
\begin{enumerate}
\item To get the first clause we take $\Lambda = \Z$, which is the choice used in \cite{AV} to construct groups with prescribed speed behaviour. Then $\overline \lambda(n) \asymp \underline \lambda(n) \asymp \sqrt{n}$ and $h(n)\asymp \log (n+1)$. We thus get
$$\ev|X_n| \asymp \sqrt{ng(n)} \asymp f(n) , \,\,\ \   H(X_n)\asymp g(n)\log (n+1).$$

\item To get the second clause we take $\Lambda = \Z_2 \wr \Z$. We then have $\overline \lambda(n) \asymp \underline \lambda(n) \asymp \sqrt{n}$ and $h(n)\asymp \sqrt{n}$. We thus get
$$\ev|X_n|  \asymp f(n) , \,\,\ \   H(X_n)\asymp  f(n).$$

\end{enumerate}
\end{proof}
Note that if we took $\Lambda=\Z_2$ (or any other finite group), we would get (using similar bounds proved in \cite{AV})
$$ g(n) \preceq \ev|X_n| \preceq  \sqrt{ng(n)} , \,\ \  H(X_n)\asymp g(n).$$
This was used to get precise entropy behaviour in \cite{AV} but does not give any good control of the speed (at least via known methods).

\section{Proof of Theorem \ref{t:main speed entropy}.}\label{s:proof}
\begin{proof}
By Theorem \ref{t:extremes} clause $(1)$ there is a sequence $\mathbf{m'}=(m'_\ell)$ so that the switch-walk-switch random walk $X_n'$ on $\Gamma' = \Z \wr_{S'} {\mathcal M_{m'}}$ satisfies
$$ \ev|X_n'| \asymp f(n) , \,\, H(X_n') \asymp \frac{f^2(n)}{n}\log (n+1)$$
By Theorem \ref{t:extremes} clause $(2)$ there is a sequence $\mathbf{m''}=(m''_\ell)$ so that the switch-walk-switch random walk $X_n''$ on $\Gamma'' = \Z \wr_{S''} {\mathcal M_{m''}}$ satisfies
$$ \ev|X_n''| \asymp h(n), \,\, H(X_n'')\asymp h(n)$$
Note that  the relation $h(n)\preceq f(n)\preceq \sqrt{\frac{nh(n)}{\log (n+1)}}$ implies that
$$ \max(\ev|X_n'|,\ev|X_n''|) \asymp f(n), \,\, \max(H(X_n',X_n'')) \asymp h(n).$$
Define $\Gamma = \Gamma' \times \Gamma''$ and let $X_n=(X_n',X_n'')$ be a random walk on $\Gamma$ defined by making an independent step of the switch-walk-switch random walk in each coordinate every step.
We claim $\Gamma,X_n$ satisfy the requirements of the Theorem.
Indeed, both speed and entropy clearly satisfy
$$\max(\ev|X_n'|, \ev|X_n''|)\leq \ev|X_n|\leq \ev|X_n'| + \ev|X_n''|$$
$$\max(H(X_n'), H(X_n''))\leq H(X_n)\leq H(X_n') + H(X_n'').$$
Therefore
$$\ev|X_n| \asymp \max(\ev|X_n'|,\ev|X_n''|) \asymp f(n)$$
and $$ H(X_n) \asymp \max(H(X_n'), H(X_n'')) \asymp h(n).$$
With all constants depending on $\gamma$ only.
\end{proof}

\section{Conjectures and open questions.}\label{s:open}
As in the previous parts of this paper, by a random walk on a group we mean a random walk with a symmetric finitely supported step measure on an infinite finitely supported group.
In order to make the conjectures and questions more concise, we will focus on speed and entropy functions of the form $n^{\beta+o(1)}$ and $n^{\delta+o(1)}$ respectively. In this case, $\beta$ and $\delta$ are called the speed and entropy exponents of the walk.

In \cite{AV} it was conjectured that all speed exponents between $\frac{1}{2}$ and $1$ are attainable.
We would like to further generalize this conjecture to claim that when both the speed and entropy exponents are above $\frac{1}{2}$, essentially the only constraint on the joint behaviour is the speed-entropy relation:
\begin{conjecture}
For any $\frac{1}{2}\leq \beta\leq 1$ and any $\frac{1}{2} \leq \delta\leq 1$ satisfying $\delta\leq \beta\leq 2\delta-1$ there exists a group $G$ and a (symmetric finitely supported) random walk $X_n$ on $G$ satisfying $\ev|X_n|\asymp n^\beta$ and $H(X_n)\asymp^* n^\delta$, where $\asymp^*$ denotes equality up to polylog factors.
\end{conjecture}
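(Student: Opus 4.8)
The plan is to follow the same product strategy used to prove Theorem~\ref{t:main speed entropy}, reducing the conjecture to the construction of two ``extreme'' families of walks, now pushed down to exponent $\tfrac12$. Concretely, for a target speed exponent $\beta$ and entropy exponent $\delta$ obeying the speed--entropy relation $\delta\le\beta\le\tfrac{1+\delta}{2}$, I would seek a walk $X_n'$ realizing the upper edge of the relation, $\ev|X_n'|\asymp n^\beta$ with $H(X_n')\asymp n^{2\beta-1}\log(n+1)$, together with a diagonal walk $X_n''$ with $\ev|X_n''|\asymp H(X_n'')\asymp n^\delta$. Taking the direct product $X_n=(X_n',X_n'')$ and using $\max(\ev|X_n'|,\ev|X_n''|)\le\ev|X_n|\le\ev|X_n'|+\ev|X_n''|$ (and the analogue for $H$) would yield $\ev|X_n|\asymp n^\beta$ and $H(X_n)\asymp^* n^\delta$ exactly as in the proof of the main theorem: $\delta\le\beta$ makes $n^\beta$ dominate the speed, while $2\beta-1\le\delta$ makes $n^\delta$ dominate the entropy up to the single $\log$ factor, which is precisely what the polylog slack $\asymp^*$ is designed to absorb.

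Both building blocks are produced by Theorem~\ref{t:extremes}, which rests on Corollary~\ref{c:cor} and on the realizability of return-time profiles $\pr(T>n)\asymp g(n)/n$ for the induced walk $o.X_n$ on the orbit $S$. Writing $g=f^2/n$ as in the proof of Theorem~\ref{t:extremes}, a speed exponent $\beta$ forces $g$ to have log-Lipshitz exponents near $2\beta-1$; for $\beta\in[\tfrac34,1)$ this keeps $g$ above the diffusive threshold $n^{1/2}$ demanded by Corollary~\ref{c:cor} (inherited from Corollary~$19$ and Lemma~$20$ of \cite{AV}). The first step of the proof would therefore be to extend this return-time construction to degree sequences $\mathbf m$ for which $\pr(T>n)\asymp n^{\theta-1}$ with $\theta\in[0,\tfrac12)$ — that is, for which the walk on $S$ is more transient than diffusive — while still keeping $\sum_{i\le n}\pr(T>i)\preceq g(n)$ so that both halves of Proposition~\ref{p:bounds} stay tight. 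Granting such sequences, the bounds of Proposition~\ref{p:bounds} would deliver the two extreme families for all $\beta,\delta\in[\tfrac12,1)$ verbatim, and the product argument would conclude.

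The hard part, and the reason the statement is only a conjecture, is exactly this extension below the diffusive threshold. The piecewise mother groups are built so that $o.X_n$ behaves like a controlled walk on a line-like orbit, and the known degree-sequence constructions appear to bottom out at $g\asymp n^{1/2}$ (return tail $\asymp n^{-1/2}$); producing a symmetric, finitely supported walk whose first-return tail decays like $n^{\theta-1}$ for $\theta<\tfrac12$ while retaining the matching upper bound $\sum_{i\le n}\pr(T>i)\preceq g(n)$ seems to demand either a genuinely different base group or a new tree geometry. I would first test whether iterating the permutational wreath construction, or replacing $\mm$ by a base group whose orbital walk is itself one of the slow walks produced at the previous stage, lowers the attainable exponent; this bootstrapping is the natural candidate, but its effect on the two-sided control of $T$ — and hence on the simultaneous tightness of the speed and entropy bounds — is precisely what remains to be understood.
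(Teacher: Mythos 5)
This statement is an open conjecture in the paper (Section~\ref{s:open}); the paper offers no proof of it, so there is nothing to compare your argument against. To your credit, you do not claim to prove it either: you correctly reconstruct the strategy the author would presumably use (a direct product of an ``upper-edge'' walk with $\ev|X_n'|\asymp n^\beta$, $H(X_n')\asymp n^{2\beta-1}\log(n+1)$ and a ``diagonal'' walk with $\ev|X_n''|\asymp H(X_n'')\asymp n^\delta$, exactly as in Section~\ref{s:proof}), and you correctly locate the obstruction: Corollary~\ref{c:cor} requires $g=f^2/n\succeq n^{1/2}$, i.e.\ $\beta\ge 3/4$, because the degree-sequence constructions of \cite{AV} only realize return tails $\pr(T>n)\asymp g(n)/n$ for $g$ at or above the diffusive threshold. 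You also silently correct the typo in the stated constraint ($\delta\le\beta\le 2\delta-1$ is vacuous for $\delta<1$; the intended condition, as in the abstract, is $\delta\le\beta\le\frac{1+\delta}{2}$). All of this is a sound reading of the paper.

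Two substantive caveats on your plan. First, your description of the needed regime as ``more transient than diffusive'' is backwards: $\pr(T>n)\asymp n^{\theta-1}$ with $\theta<\tfrac12$ means the first-return tail is \emph{lighter} than that of simple random walk on $\Z$, equivalently the orbit walk on $S$ visits only $\asymp n^{\theta}$ sites, i.e.\ it must be \emph{sub}diffusive/strongly recurrent on the Schreier graph --- which is why the paper hints that one may have to leave exponential growth altogether when $\delta<\tfrac12$. Second, your plan only flags the obstruction for the upper-edge walk $X_n'$; the diagonal building block $X_n''$ with $\ev|X_n''|\asymp H(X_n'')\asymp n^\delta$ is equally unavailable from Theorem~\ref{t:extremes} for $\delta\in(\tfrac12,\tfrac34)$, since clause~(2) inherits the same $n^{3/4}$ floor (only the endpoint $\delta=\tfrac12$ is covered, by $\Z_2\wr\Z$ itself). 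So even granting your bootstrapping idea for the return times, a second, independent gap would remain. As a research outline your proposal is reasonable; as a proof it is, as you acknowledge, not one, and the statement remains a conjecture.
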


When the entropy exponent drops below $\frac{1}{2}$, things are less clear. It is conjectured that one must move to the realm of sub-exponential growth, where even examples of speed exponents above $\frac{1}{2}$ are not known. Relaxing the precision required,  we therefore ask:
\begin{question}
For what values of $0\leq \delta \leq \frac{1}{2}$ and $\frac{1}{2}\leq \beta \leq 1$  does there exist a random walk $X_n$ on a group $G$ such that $ \ev|X_n|\asymp n^{\beta+o(1)}$ and $H(X_n)\asymp n^{\delta+o(1)}$ ?
\end{question}

The groups we considered are permutational wreath products by piecewise mother groups. It is still an open question to find the speed or entropy behaviour for the piecewise mother groups themselves, even in the case of when the degree sequence $\mathbf{m}$ is constant. In fact, this is open on any of the classical automaton groups (except perhaps the lamplighter group); we will skip the definition of these groups but they can be found in, for example \cite{AAV}.
\begin{problem} [\cite{AV} Problem $32$]
Find the asymptotic behaviour  the speed exponent or the entropy exponent in any of the following groups: Grigorchuk's group, Basilica group,  Hanoi towers group, piecewise mother groups.
\end{problem}

As mentioned in the introduction, beside the rate of escape and the entropy there are several other quantities related to groups and random walks on groups that have been extensively studied and have many connections with geometric and analytic properties of the groups involved. Among the most notable examples are the volume growth of the group, the return probabilities of the random walk and the Hilbert compression exponent of the group (See \cite{NP} for definition). The question studied in this paper can be seen as a case of the more general question regarding the possible joint behaviour of these quantities.
One possible formulation is as follows.
\begin{question}\label{q:general}
For which tuples $(\alpha,\beta,\gamma,\delta,\rho)$ does there exist a group $G$ with Hilbert compression exponent $\alpha$, volume growth $e^{n^{\rho+o(1)}}$ and a random walk  $X_n$ on that group with $$\ev|X_n|=n^{\beta+o(1)},\,\,\, P(X_n=id)=e^{-n^{\gamma+o(1)}},\,\,\, H(X_n)=n^{\delta+o(1)} ?$$
\end{question}
There are many known relations between the different quantities (See e.g. the references in section \ref{s:intro}), but very few constructions allowing to control more than one of them simultaniously. For more on the various exponents and their relations we refer the reader to \cite{Gou}.

\smallskip
\noindent{\bf Acknowledgments.}
 We wish to thank Laurant Bartholdi for a discussion concerning question \ref{q:general} and Miklos Abert, Omer Angel, Balint Virag and the BIRS centre for the wonderful conference on Groups, Graphs and Stochastic Processes during which this work was initiated. The author's work was supported by the Israel Science Foundation grant ISF $1471/11$ and by a young GIF grant $\# I-1121-304.1-2012$ from the German Israeli foundation.
\bibliographystyle{alpha}

\bigskip\bigskip\bigskip\noindent
\begin{minipage}{0.49\linewidth}
Gideon Amir
\\Department of Mathematics
\\Bar Ilan University
\\Ramat Gan, 52900 Israel
\\{\tt amirgi@macs.biu.ac.il }
\end{minipage}

\end{document}